\titleformat{\section}{\normalfont\scshape}{\thesection.}{0.6em}{}
\titleformat{\subsection}{\normalfont\itshape}{\thesubsection.}{0.6em}{}
\titlespacing*{\section}{0pt}{2.0ex plus .6ex minus .3ex}{1.0ex plus .3ex}
\titlespacing*{\subsection}{0pt}{1.4ex plus .5ex minus .3ex}{0.8ex plus .2ex}
\numberwithin{equation}{section}
\newtheorem{theorem}{Theorem}[section]
\newtheorem{lemma}[theorem]{Lemma}
\theoremstyle{remark}
\newtheorem{remark}[theorem]{Remark}
\newcommand{\dd}{\mathrm{d}}
\title{Self-Generated Measures and the Centroid Rigidity of Power Laws}
\author[1]{Vincent E.~Coll, Jr.\thanks{Corresponding author. Email: \texttt{vec208@lehigh.edu}. ORCID: 0000-0002-5775-5522}}
\author[2]{Lee B.~Whitt}
\affil[1]{Department of Mathematics, Lehigh University, Bethlehem, PA 18015, USA}
\affil[2]{Northrop Grumman — Distinguished Technical Fellow (retired), San Diego, CA\\ \texttt{lee.barlow.whitt@hotmail.com}}
\date{}
\begin{document}
\maketitle

\noindent \textbf{MSC (2020):} Primary 26D15; Secondary 39B22, 26A51.\\
\noindent \textbf{Keywords:} centroid, rigidity, power law, elasticity, probability method.

\begin{abstract}
We revisit a classical calculus computation—the centroid of the subgraph of a function on $[0,a]$—and show that it hides a rigidity theorem. 
Let $f:(0,\infty)\to(0,\infty)$ be $C^2$ with $f(0^+)=0$, and set
\[
\bar x(a)=\frac{\int_0^a x f(x)\,\dd x}{\int_0^a f(x)\,\dd x},\qquad
\bar y(a)=\frac12\,\frac{\int_0^a f(x)^2\,\dd x}{\int_0^a f(x)\,\dd x}.
\]
We prove that the \emph{Geometric Scaling Property}
\[
\bar y(a)=\lambda\,f(\bar x(a))\qquad(\forall a>0)
\]
holds \emph{if and only if} $f(x)=A x^{p}$ with $A>0$ and $p>0$, and for power laws the optimal constant is
\[
\lambda=\frac{p+1}{2(2p+1)}\left(\frac{p+2}{p+1}\right)^{p}.
\]
After a scale-free normalization, the argument is probabilistic: under the self-generated probability measure on $[0,a]$ with density 
$f(x)\big/\!\int_0^a f(t)\,\dd t$, one has $\bar x(a)=\mathbb{E}[X_a]$ and $\bar y(a)=\tfrac12\,\mathbb{E}[f(X_a)]$, so GSP becomes an equality in expectation across all truncation scales. Differentiation in $a$ yields a weighted-mean identity for the elasticity $E(x)=x f'(x)/f(x)$; a second differentiation forces a vanishing-variance principle that makes $E$ constant, hence $f$ a pure power and the stated $\lambda$ follows. The proof uses no asymptotics and extends to $f\in C^1(0,\infty)$ with locally Lipschitz elasticity.
\end{abstract}

\section{Introduction and motivation}
Centroid formulas from first-year calculus suggest asking whether a persistent relation can hold between the centroid height and the function value sampled at the centroid abscissa (see, e.g., \cite{ApostolCalculus}). Precisely, for $f>0$ with $f(0^+)=0$, define
\[
\bar x(a)=\frac{\int_0^a x f(x)\,\dd x}{\int_0^a f(x)\,\dd x},\qquad
\bar y(a)=\frac12\,\frac{\int_0^a f(x)^2\,\dd x}{\int_0^a f(x)\,\dd x}.
\]
We call
\[
\bar y(a)=\lambda\,f(\bar x(a))\qquad(\forall a>0)
\]
the \emph{Geometric Scaling Property} (GSP). Despite its elementary appearance, GSP is rigid: we show it forces $f$ to be a power law and compute $\lambda$ explicitly. Our argument is scale-free and probabilistic.

\noindent\textit{Literature.}
Our WM/Var step can be viewed as an \emph{$L^2$ equality–case} (variance–zero / Cauchy–Schwarz) phenomenon (cf.\ \cite{HLP} for equality cases in quadratic forms) and as a functional–equation constraint on logarithmic scales (cf.\ \cite{Kuczma}); see also \cite{BGT} for the role of logarithmic derivatives in regular variation.

\noindent\textit{Functional-equation viewpoint.}
Equation~\eqref{eq:GSP-shape} is a functional equation on logarithmic scales; our proof proceeds via an equality–case derivation.

\subsection*{Provenance (and why this is trickier than it looks)}
This note grew out of a first–year calculus discussion about centroids. We asked a naïve question:
could the centroid height of the subgraph on $[0,a]$ be proportional to the function value sampled at
the centroid abscissa, for \emph{every} truncation length $a$? On the board this looks like harmless
bookkeeping—two familiar centroid formulas and a single proportionality constant. The surprise is that
this “calculus identity” quietly enforces a \emph{global} structure on $f$: when written in scale–free
form, the relation becomes an equality in expectation for a naturally self–generated probability measure.
Differentiating with respect to the truncation scale $a$ turns the question into one about the \emph{elasticity}
$E(x)=x f'(x)/f(x)$; a second differentiation reveals a \emph{variance–zero (Cauchy–Schwarz equality)} case. The only way this can
persist across all $a$ is if the elasticity is constant—so $f$ is a pure power law, and the proportionality
constant is forced as well.

In short, a classroom computation leads to a rigidity theorem: an apparently benign proportionality at the
\emph{centroid} pins down the \emph{shape} of the entire function. The argument stays elementary, but the
difficulty is conceptual—one is comparing quantities formed at different places (the integrals over $[0,a]$)
and at different scales (the function sampled at the moving point $\bar x(a)$), and only after recasting the
problem probabilistically does the hidden symmetry become visible.

\section{Setup and notation}\label{sec:setup}
Let $f\in C^{2}(0,\infty)$ with $f(x)>0$ for $x>0$ and $f(0^+)=0$. Define
\[
F(a)=\int_0^{a} f(x)\,\dd x,\qquad
H(a)=\int_0^{a} x\,f(x)\,\dd x,\qquad
G(a)=\int_0^{a} f(x)^2\,\dd x.
\]
The centroid of the subgraph $\{(x,y):0\le x\le a,\ 0\le y\le f(x)\}$ is
\[
\bar x(a)=\frac{H(a)}{F(a)},\qquad
\bar y(a)=\frac{G(a)}{2F(a)}.
\]

\subsection*{Notation (all symbols used)}
\begin{itemize}[leftmargin=1.5em]
\item $f:(0,\infty)\to(0,\infty)$: base function; $f(0^+)=0$; smoothness as stated.
\item $F,H,G$: primitives $F(a)=\int_0^a f$, $H(a)=\int_0^a x f$, $G(a)=\int_0^a f^2$.
\item $\bar x(a),\bar y(a)$: centroid coordinates of the subgraph on $[0,a]$.
\item $g_a(s):=\dfrac{f(as)}{f(a)}$ for $s\in[0,1]$ (shape profile at scale $a$).
\item $A(a):=\dfrac{F(a)}{a f(a)}$, \ $B(a):=\dfrac{H(a)}{a^2 f(a)}$, \ $C(a):=\dfrac{G(a)}{a f(a)^2}$.
\item $\theta(a):=\dfrac{B(a)}{A(a)}\in(0,1)$ (scale-free centroid abscissa).
\item $E(x):=\dfrac{x\,f'(x)}{f(x)}$ (elasticity; logarithmic derivative of $f$).
\item $\mu_a$: self-generated probability on $[0,a]$, $\mu_a(\dd x)=\dfrac{f(x)}{F(a)}\mathbf{1}_{[0,a]}(x)\,\dd x$.
\item $X_a\sim\mu_a$: random variable with $\mathbb{E}[X_a]=\bar x(a)$ and $\mathbb{E}[f(X_a)]=2\bar y(a)$.
\item $D(a):=\int_0^1 (s-\theta(a))^2 g_a(s)\,\dd s>0$ (quadratic weight normalizer).
\end{itemize}

Fix $a>0$ and define the \emph{rescaled shape}
\[
 g_a(s):=\frac{f(as)}{f(a)}\qquad (0\le s\le 1),
\]
and the \emph{elasticity}
\[
 E(x):=\frac{x\,f'(x)}{f(x)}.
\]
The function $E(x)$ is called the \emph{elasticity} of $f$.\footnote{Elasticity is familiar in economics as the percentage change in $f$ per percentage change in $x$ (cf.\ \cite{VarianMicro}); analytically it is the logarithmic derivative, as used in regular variation \cite{BGT}.}

A change of variables $x=as$ gives the normalized moments
\begin{equation}\label{eq:ABC}
A(a):=\int_0^1 g_a(s)\,\dd s=\frac{F(a)}{a f(a)},\quad
B(a):=\int_0^1 s\,g_a(s)\,\dd s=\frac{H(a)}{a^2 f(a)},\quad
C(a):=\int_0^1 g_a(s)^{2}\,\dd s=\frac{G(a)}{a f(a)^{2}}.
\end{equation}
Set
\[
 \theta(a):=\frac{B(a)}{A(a)}\in(0,1),\qquad \bar x(a)=a\,\theta(a).
\]
Two basic differential identities (chain and quotient rules), valid for $s\in(0,1]$, are
\begin{equation}\label{eq:ga-derivs}
 g_a(1)=1,\qquad
 g_a'(s)=\frac{E(as)}{s}\,g_a(s),\qquad
 \partial_a g_a(s)=\frac{g_a(s)}{a}\,\big(E(as)-E(a)\big).
\end{equation}
These identities are intended for $s\in(0,1]$; the factor $1/s$ in $g_a'(s)$ precludes evaluation at $s=0$.

\section{Probabilistic reformulation}
For each $a>0$, define a probability measure $\mu_a$ on $[0,a]$ by
\[
\mu_a(\dd x)=\frac{f(x)}{F(a)}\,\mathbf{1}_{[0,a]}(x)\,\dd x,\qquad F(a)=\int_0^a f(x)\,\dd x,
\]
and let $X_a\sim\mu_a$. Then
\[
\bar x(a)=\mathbb{E}[X_a],\qquad \bar y(a)=\tfrac12\,\mathbb{E}[f(X_a)].
\]
Thus GSP is the equality-in-expectation
\begin{equation}\label{eq:GSP-prob}
\tfrac12\,\mathbb{E}[f(X_a)] \;=\; \lambda\, f\!\big(\mathbb{E}[X_a]\big)\qquad(\forall a>0).
\end{equation}
Passing to scale-free variables $s=x/a$, set
\[
g_a(s):=\frac{f(as)}{f(a)},\qquad \nu_a(\dd s):=\frac{g_a(s)}{A(a)}\,\mathbf{1}_{[0,1]}(s)\,\dd s,
\]
so that if $S_a\sim\nu_a$ then $\mathbb{E}[S_a]=\theta(a)$ and $\mathbb{E}[g_a(S_a)]=C(a)/A(a)$. In these terms, \eqref{eq:GSP-prob} is equivalent to the scale-free identity
\begin{equation}\label{eq:GSP-shape}
 \frac12\,\frac{C(a)}{A(a)}=\lambda\,g_{a}\!\big(\theta(a)\big)\qquad(\forall a>0).
\end{equation}

\section{Main theorem}\label{sec:main}
\begin{theorem}\label{thm:GSP}
Assume the \emph{Geometric Scaling Property} \eqref{eq:GSP-shape}. Then there exist $A>0$ and $p>0$ such that $f(x)=A\,x^{p}$ for all $x>0$, and
\[
 \lambda=\frac{p+1}{2(2p+1)}\!\left(\frac{p+2}{p+1}\right)^{\!p}.
\]
\end{theorem}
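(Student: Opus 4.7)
The plan is to differentiate the scale-free GSP \eqref{eq:GSP-shape} twice in the truncation parameter $a$ and extract a vanishing-variance condition that forces the elasticity to be constant. First I would rewrite GSP as $G(a) = 2\lambda F(a)\,f(\bar x(a))$ and differentiate in $a$, using $F'=f$, $G'=f^2$, and $\bar x'(a) = f(a)(a-\bar x(a))/F(a)$; the common factor $f(a)/F(a)$ cancels, leaving the tangent-line identity
\[
f(a) = 2\lambda\bigl[f(\bar x(a)) + f'(\bar x(a))\,(a - \bar x(a))\bigr].
\]
Dividing by $f(\bar x)$, substituting $f'(\bar x) = E(\bar x)f(\bar x)/\bar x$, and using GSP again to rewrite $f(a)/f(\bar x) = 2\lambda A(a)/C(a)$ yields the \emph{weighted-mean identity for the elasticity}
\[
E\bigl(\bar x(a)\bigr) = \frac{\theta(a)}{1-\theta(a)}\left(\frac{A(a)}{C(a)} - 1\right).
\]
Combined with the integration-by-parts relation $\int_0^a Ef = af - F$—which gives $\mathbb{E}_{\mu_a}[E(X_a)] = 1/A(a) - 1$—the right-hand side is an explicit combination of self-generated weighted means of $E$.

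Next I would differentiate the weighted-mean identity once more in $a$. The required derivatives of the scale-free moments are governed by the ODEs
\[
aA'(a) = 1 - A(a)\bigl(1+E(a)\bigr), \qquad aC'(a) = 1 - C(a)\bigl(1+2E(a)\bigr),
\]
(both consequences of integration by parts from $Ef = xf'$; the second arises via $\int_0^a Ef^2 = (af^2 - G)/2$), together with $\theta'(a) = [1 - \theta(1+A)]/(aA)$. The left-hand side becomes $E'(\bar x)(1-\theta)/A$ by the chain rule. After substituting everything and using the weighted-mean identity once more to cancel every reappearance of $E(\bar x)$, I expect the resulting equation to collapse—this is the crux—into a relation of the form
\[
\bigl(\text{strictly positive normalizer}\bigr)\cdot\operatorname{Var}_{\mu_a}\!\bigl(E(X_a)\bigr) \;=\; 0 \qquad(\forall a>0),
\]
with the normalizer controlled by $D(a) = \int_0^1 (s-\theta(a))^2 g_a(s)\,\dd s > 0$ from the notation list. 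This is the Cauchy--Schwarz/variance-zero equality case; because $\mu_a$ has full support in $[0,a]$ (as $f>0$), it forces $E$ to be constant on every $[0,a]$, and hence $E(x) \equiv p$ on $(0,\infty)$.

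With $E \equiv p$, the separable ODE $xf'(x)=pf(x)$ integrates to $f(x)=Ax^{p}$; the boundary condition $f(0^+)=0$ rules out $p\le 0$ and positivity forces $A>0$. Substituting $f(x)=Ax^{p}$ into the definitions gives $\bar x(a)=a(p+1)/(p+2)$ and $\bar y(a)=Aa^{p}(p+1)/(2(2p+1))$, and computing $\bar y(a)/f(\bar x(a))$ produces the stated closed form for $\lambda$. The main obstacle I anticipate is the algebraic collapse at the second-differentiation step: the rearrangement that turns the raw derivative of the weighted-mean identity into a genuine nonnegative variance is non-obvious, and the paper's labeling of $D(a)$ as a ``quadratic weight normalizer'' strongly suggests that the $(s-\theta)^2$-weighted integral is exactly the denominator that makes the variance appear.
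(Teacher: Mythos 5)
Your first differentiation is correct and checks out in every detail: writing GSP as $G=2\lambda F\,f(\bar x)$, differentiating, and cancelling the factor $f(a)$ does give the tangent-line identity, and the substitutions you describe land exactly on
\[
E\bigl(\bar x(a)\bigr)=\frac{\theta(a)}{1-\theta(a)}\Bigl(\frac{A(a)}{C(a)}-1\Bigr),
\]
while the auxiliary facts you quote ($\mathbb{E}_{\mu_a}[E(X_a)]=1/A-1$, the ODEs for $A$ and $C$, and $(a\theta)'=(1-\theta)/A$) are all correct. The endgame (constant elasticity $\Rightarrow f=Ax^p\Rightarrow$ the stated $\lambda$, with $p>0$ forced by $f(0^+)=0$) also matches the paper's Step~4.

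The gap is precisely at the step you flag as the crux, and it is not just ``non-obvious algebra'': the collapse to a variance cannot occur from the identity you derived. A weighted variance of $E$ necessarily contains a term $\int w\,E^2$ for some weight $w$. In the paper's argument that quadratic term is generated because the weighted-mean identity \eqref{eq:WM} keeps $E(as)$ \emph{under the integral sign} against the weight $(s-\theta)^2g_a(s)$; differentiating in $a$ then invokes $\partial_a g_a=\tfrac{g_a}{a}\bigl(E(as)-E(a)\bigr)$ from \eqref{eq:ga-derivs}, which injects a second factor of $E$ and produces $\int_0^1(s-\theta)^2g_a\,E(as)^2\,\dd s$ --- the term that completes the square in \eqref{eq:variance}. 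In your identity, by contrast, every occurrence of $E$ has already been localized: $E$ enters only at the single points $\bar x(a)$ and (through $aA'=1-(1+E(a))A$, $aC'=1-(1+2E(a))C$) at the endpoint $a$, while $A$, $C$, $\theta$ are moments of $g_a$ alone. Differentiating such an expression yields terms \emph{linear} in $E(a)$, $E(\bar x)$, $E'(\bar x)$ and never an integral of $E^2$; carrying the computation through gives, for each $a$, a single scalar relation of the form $E'(\bar x)\,\tfrac{1-\theta}{A}=(\text{explicit combination of }A,B,C,\theta,E(a),E(\bar x))$, which is satisfied by power laws but does not visibly force $E'\equiv 0$ --- it is one equation per scale, not the integral rigidity statement \eqref{eq:variance} that pins $E$ down pointwise on all of $(0,a]$. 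To repair the route you would either need a genuinely different argument to extract constancy of $E$ from that scalar relation, or you would need to arrange Step~1 so that the elasticity remains inside the $(s-\theta)^2g_a$-weighted integral, as in the paper's \eqref{eq:WM}, before differentiating a second time.
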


\begin{proof}
We begin by differentiating the scale-free centroid identity, which yields a weighted-mean formula 
for the elasticity $E(x)=x f'(x)/f(x)$. 
Differentiating once more converts this into a vanishing-variance identity, forcing $E$ to be constant. 
Constancy of elasticity implies that $f$ is a pure power law, and the explicit form of $\lambda$ 
follows by direct substitution.

\medskip

\emph{Step 1 (Closed forms for scale-derivative integrals).}
A change of variables $u=as$ and integration by parts, using $f(0^+)=0$, yield
\begin{align}
 \int_0^1 g_a(s)\,E(as)\,\dd s
 &=\frac{1}{a f(a)}\int_0^a u f'(u)\,\dd u
 =\frac{a f(a)-F(a)}{a f(a)}=1-A(a),\label{eq:I1}\\[4pt]
 \int_0^1 s\,g_a(s)\,E(as)\,\dd s
 &=\frac{1}{a^2 f(a)}\int_0^a u^2 f'(u)\,\dd u
 =\frac{a^2 f(a)-2H(a)}{a^2 f(a)}=1-2B(a),\label{eq:I2}\\[4pt]
 \int_0^1 g_a(s)^2\,E(as)\,\dd s
 &=\frac{1}{a f(a)^2}\int_0^a u f(u) f'(u)\,\dd u
 =\frac{a f(a)^2-G(a)}{2 a f(a)^2}=\frac{1-C(a)}{2}.\label{eq:I3}
\end{align}
Consequently, substituting \eqref{eq:ga-derivs} and \eqref{eq:I1}--\eqref{eq:I3} into
\[
A'=\int_0^1 \partial_a g_a(s)\,\dd s,\quad
B'=\int_0^1 s\,\partial_a g_a(s)\,\dd s,\quad
C'=2\int_0^1 g_a(s)\,\partial_a g_a(s)\,\dd s
\]
gives
\begin{equation}\label{eq:ABC-prime}
 A'=\frac{1}{a}\big(1-(1+E(a))A\big),\ \
 B'=\frac{1}{a}\big(1-(2+E(a))B\big),\ \
 C'=\frac{1}{a}\big(1-(1+2E(a))C\big),
\end{equation}
and, by differentiating $\theta=B/A$,
\begin{equation}\label{eq:theta-prime}
 \theta'=\frac{B'A-BA'}{A^2}=\frac{1}{aA}\int_0^1 (s-\theta)\,g_a(s)\,E(as)\,\dd s.
\end{equation}
\emph{By Lemma~\ref{lem:reg-ABC} (Supplement) we obtain \eqref{eq:ABC-prime} and \eqref{eq:theta-prime} rigorously.}

\smallskip
\emph{Step 2 (Weighted-mean identity, by citation).}
By Lemma~\ref{lem:wm-var} in the Supplement, differentiating \eqref{eq:GSP-shape} yields the
\emph{weighted–mean identity}
\begin{equation}\label{eq:WM}
 \frac{\displaystyle\int_0^1 (s-\theta)^2 g_a(s)\,E(as)\,\dd s}{\displaystyle\int_0^1 (s-\theta)^2 g_a(s)\,\dd s}
 \;=\;E\!\big(a\theta\big)\qquad(\theta=\theta(a)).
\end{equation}
The denominator is strictly positive since $g_a>0$ a.e.\ on $(0,1]$ and $(s-\theta)^2\not\equiv 0$.

\smallskip
\emph{Step 3 (Vanishing variance, by citation).}
Differentiating \eqref{eq:WM} and invoking Lemma~\ref{lem:wm-var} again gives
\begin{equation}\label{eq:variance}
 \int_0^1 (s-\theta(a))^2\,g_a(s)\,\big(E(as)-E(a\theta(a))\big)^{2}\,\dd s\;=\;0\qquad(\forall a>0).
\end{equation}

As a non-example, the oscillatory perturbation $f(x)=x^{p}\!\big(1+\varepsilon\sin\log x\big)$
violates \eqref{eq:WM}–\eqref{eq:variance} for any $\varepsilon\ne0$, since
$E(x)=p+\varepsilon\cos\log x+O(\varepsilon^2)$ is non-constant.

\smallskip
\emph{Step 4 (Constancy of elasticity and conclusion).}
Since the weight $(s-\theta(a))^2 g_a(s)$ is nonnegative on $[0,1]$ and positive on every subinterval of $(0,1]\setminus\{\theta(a)\}$ (because $g_a>0$ on $(0,1]$), \eqref{eq:variance} implies
\[
 E(as)\equiv E(a\theta(a))\quad\text{for all }s\in[0,1].
\]
Hence for each $a>0$, $E$ is constant on $(0,a]$. If $0<a_1<a_2$, then $E\equiv c(a_1)$ on $(0,a_1]$ and $E\equiv c(a_2)$ on $(0,a_2]$, so $c(a_1)=c(a_2)$ by overlap. Therefore $E(x)\equiv p$ on $(0,\infty)$ for some constant $p\in\mathbb{R}$. The ODE $x f'(x)=p\,f(x)$ integrates to $f(x)=A\,x^{p}$ with $A>0$. Substituting into the centroid formulas yields
\[
 \bar x(a)=\frac{p+1}{p+2}\,a,\qquad
 \bar y(a)=\frac{p+1}{2(2p+1)}\,A\,a^{p},
\]
so
\[
 \bar y(a)=\frac{p+1}{2(2p+1)}\!\left(\frac{p+2}{p+1}\right)^{\!p}\, f(\bar x(a)),
\]
identifying $\displaystyle \lambda=\frac{p+1}{2(2p+1)}\big(\tfrac{p+2}{p+1}\big)^{p}$. Conversely, direct substitution of $f(x)=A x^p$ into the centroid formulas shows \eqref{eq:GSP-shape} holds with this $\lambda$. Finally, since $f(0^+)=0$ and $f(x)=A x^p>0$ for $x>0$, we must have $p>0$.
Indeed, if $0<a_1<a_2$, constancy on $(0,a_2]$ implies constancy on $(0,a_1]$, so the constants coincide.
\end{proof}

\begin{remark}[Regularity and finiteness]\label{rem:hyp}
The assumptions $f\in C^2$, $f>0$, $f(0^+)=0$ ensure that the integrals in \eqref{eq:ABC} are finite for all $a>0$. Differentiation under the integral signs in $A',B',C'$ is justified by dominated convergence on the compact $s$–interval together with the reductions \eqref{eq:I1}--\eqref{eq:I3}, which convert the potentially singular factors into integrals of $u f'(u)$, $u^2 f'(u)$, and $u f(u)f'(u)$ that are finite by the fundamental theorem of calculus (see, e.g., \cite[Ch.~2]{RudinRCA}). Moreover, $f\in C^2$ implies $g_a$ is $C^1$ in $(a,s)$ for $s>0$, and $\theta$ is $C^1$; thus the chain rule justifies the differentiation $\partial_a g_a(\theta)+g_a'(\theta)\,\theta'$ used in Step~2.
\end{remark}

\begin{remark}[Weaker hypotheses]\label{rem:weaker}
The $C^2$ assumption in the Main Theorem can be weakened. It suffices to assume that $f:(0,\infty)\to(0,\infty)$ is $C^1$, with $f(0^+)=0$, and that its elasticity
\[
E(x)=\frac{x f'(x)}{f(x)}
\]
is \emph{locally Lipschitz} on $(0,\infty)$ (and, with minor routine changes, one can relax to bounded variation).
 Under these hypotheses:
\begin{itemize}[leftmargin=1.5em]
\item The identities in \eqref{eq:ga-derivs} hold for a.e.\ $s\in(0,1]$ (Rademacher + chain rule), and $A,B,C$ are absolutely continuous in $a$ with derivatives given by \eqref{eq:ABC-prime} for a.e.\ $a$>0 (differentiate the closed forms in \eqref{eq:ABC}).
\item The weighted-mean identity \eqref{eq:WM} holds for a.e.\ $a>0$. Moreover, $a\mapsto A(a),B(a),C(a)$ and
$a\mapsto g_a(\theta(a))$ are continuous on $(0,\infty)$, so \eqref{eq:WM} holds for a dense set of $a$ and therefore
for all $a>0$ by continuity of both sides.
\item Differentiating \eqref{eq:WM} is legitimate for a.e.\ $a$ because $E$ is locally Lipschitz, hence a.e.\ differentiable and locally bounded; the same cancellations yield \eqref{eq:variance} for a.e.\ $a$, and by continuity in $a$ (of the integrand and weights) the identity extends to all $a>0$.
\end{itemize}
As in Step~4, \eqref{eq:variance} forces $E$ to be constant on every $(0,a]$, hence constant on $(0,\infty)$ by overlap; integrating $x f'(x)=p f(x)$ gives $f(x)=A x^p$ with $A>0$, $p>0$. Thus $f\in C^1$, $f>0$, $f(0^+)=0$, and $E$ locally Lipschitz already suffice for the theorem.
\end{remark}

\section*{Appendix: Derivative identities (expanded algebra)}
We record the computations used above.

\begin{lemma}\label{lem:diff-details}
With $A,B,C,\theta$ as in \eqref{eq:ABC} and $g_a$ as in \eqref{eq:ga-derivs}, one has:
\begin{enumerate}[label=\textnormal{(a)}]
 \item \emph{Integral reductions and derivative formulas.} The identities \eqref{eq:I1}--\eqref{eq:I3} hold. Consequently,
 \[
 A'=\frac{1-(1+E)A}{a},\quad
 B'=\frac{1-(2+E)B}{a},\quad
 C'=\frac{1-(1+2E)C}{a},\quad
 \theta'=\frac{1}{aA}\int_0^1 (s-\theta)\,g_a(s)\,E(as)\,\dd s.
 \]
 \item \emph{(For WM/Var, see Lemma~\ref{lem:wm-var} in the Supplement.)}
\end{enumerate}
\end{lemma}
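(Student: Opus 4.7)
The plan is to prove part (a) in three stages that mirror the structure of the claims: first, derive the three integral identities \eqref{eq:I1}--\eqref{eq:I3} by a common substitution-plus-parts calculation; second, read off $A'$, $B'$, $C'$ by differentiating under the integral in \eqref{eq:ABC} and substituting the formula for $\partial_a g_a$ from \eqref{eq:ga-derivs}; third, deduce the stated representation of $\theta'$ by the quotient rule. Part (b) requires no additional argument here, since the Weighted-Mean/Variance identities are cited from the Supplement.

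For the integral reductions, I would observe that the integrand in each of \eqref{eq:I1}--\eqref{eq:I3} collapses by the cancellation $g_a(s)\,E(as) = \frac{f(as)}{f(a)}\cdot\frac{as\,f'(as)}{f(as)} = \frac{as\,f'(as)}{f(a)}$. The substitution $u=as$ then converts each integral into a scalar multiple of $\int_0^a u^{k} f'(u)\,\dd u$ for $k=1,2$, or (for \eqref{eq:I3}, after writing $ff' = \tfrac12(f^2)'$) of $\int_0^a u\,(f^2)'(u)\,\dd u$. A single integration by parts, with the boundary term at $0$ killed by $f(0^+)=0$, yields the three right-hand sides $1-A$, $1-2B$, and $(1-C)/2$, matching the claimed identities after dividing by the appropriate normalization.

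For the derivatives of $A,B,C$, I would differentiate \eqref{eq:ABC} under the integral sign (justified by $f\in C^2$, as noted in Remark~\ref{rem:hyp}) and substitute $\partial_a g_a(s) = \frac{g_a(s)}{a}\bigl(E(as)-E(a)\bigr)$ from \eqref{eq:ga-derivs}. Splitting the resulting integrand into an $E(as)$-piece and an $E(a)$-piece, the first is exactly one of \eqref{eq:I1}--\eqref{eq:I3}, while the second factors $E(a)$ out of the normalized moment $A$, $B$, or $C$. Collecting produces the stated formulas; the $1+2E$ coefficient in $C'$ arises from the extra factor of $2$ in $C' = 2\int g_a\,\partial_a g_a\,\dd s$.

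For $\theta = B/A$, the quotient rule gives $\theta' = (B'A - BA')/A^2$, and substituting the expressions above causes the $E(a)$-terms to combine as $(2+E)AB - (1+E)AB = AB$ and cancel cleanly with $B=\theta A$, leaving $\theta' = (A - B - \theta A)/(aA^2)$. Using \eqref{eq:I1}--\eqref{eq:I2} in reverse, the numerator is $\int_0^1(s-\theta)g_a(s)E(as)\,\dd s = (1-2B) - \theta(1-A) = 1-B-\theta$, matching the desired integral representation. The main obstacle is purely bookkeeping---tracking signs and coefficients so that the $E(a)$-cancellations in the $\theta'$ computation are transparent---and the one genuine subtlety is that the $1/s$ factor in $g_a'(s)$ from \eqref{eq:ga-derivs} never enters the integrals we actually differentiate, since only $\partial_a g_a$ (which carries no such factor) appears; this sidesteps the regularity warning beneath \eqref{eq:ga-derivs}.
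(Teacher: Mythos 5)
Your proposal is correct and follows essentially the same route as the paper: the substitution $u=as$ plus one integration by parts (using $f(0^+)=0$) for \eqref{eq:I1}--\eqref{eq:I3}, then differentiation under the integral with $\partial_a g_a=\tfrac{g_a(s)}{a}\bigl(E(as)-E(a)\bigr)$ and the quotient rule for $\theta'$, exactly as in Step~1 of the main proof and Lemma~\ref{lem:reg-ABC}. The only blemish is the intermediate display $\theta'=(A-B-\theta A)/(aA^2)$, which should read $(A-B-AB)/(aA^2)=(1-\theta-B)/(aA)$; since your final check against $\int_0^1(s-\theta)\,g_a(s)\,E(as)\,\dd s=(1-2B)-\theta(1-A)=1-B-\theta$ is correct, this is a transcription slip rather than a gap.
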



\section*{Supplementary Note: Regularity and Differentiation Details}

\begin{lemma}[Regularity of $A,B,C,\theta$ and closed forms]\label{lem:reg-ABC}
Assume $f\in C^2(0,\infty)$, $f(0^+)=0$, and $f(x)>0$ for $x>0$. Define
\[
F(a)=\int_0^a f(x)\,\dd x,\quad H(a)=\int_0^a x f(x)\,\dd x,\quad G(a)=\int_0^a f(x)^2\,\dd x,
\]
\[
A(a)=\frac{F(a)}{a f(a)},\quad B(a)=\frac{H(a)}{a^2 f(a)},\quad C(a)=\frac{G(a)}{a f(a)^2},\quad
\theta(a)=\frac{B(a)}{A(a)},\quad E(x)=\frac{x f'(x)}{f(x)}.
\]
Then $A,B,C,\theta\in C^1(0,\infty)$ and, for all $a>0$,
\[
A'=\frac{1-(1+E(a))A}{a},\qquad
B'=\frac{1-(2+E(a))B}{a},\qquad
C'=\frac{1-(1+2E(a))C}{a},\qquad
\theta'=\frac{B'A-BA'}{A^2}.
\]
\emph{Proof.} Use $F'(a)=f(a)$ and $H'(a)=a f(a)$ by the fundamental theorem of calculus. Integration by parts yields
\[
\int_0^a u f'(u)\,\dd u=a f(a)-F(a),\qquad
\int_0^a u^2 f'(u)\,\dd u=a^2 f(a)-2H(a),
\]
\[
\int_0^a u f(u)f'(u)\,\dd u=\tfrac12\big(a f(a)^2-G(a)\big),
\]
all finite on $[0,a]$ since $f\in C^2$ and $f(0^+)=0$. Differentiate $A,B,C$ as quotients of $C^1$ functions and simplify using the three identities; $\theta'= (B'A-BA')/A^2$ follows. \qedhere
\end{lemma}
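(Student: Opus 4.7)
The plan is to prove the smoothness and the closed forms by direct quotient-rule differentiation of $A = F/(af)$, $B = H/(a^2 f)$, $C = G/(af^2)$, after identifying the elementary derivatives $F', H', G'$ via the fundamental theorem of calculus. The auxiliary integration-by-parts identities can then be recorded separately, as they dovetail with the main-text identities \eqref{eq:I1}--\eqref{eq:I3} but are not strictly required for the differentiation itself.

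First I would note that, since $f \in C^2(0,\infty)$, the primitives $F, H, G$ are $C^1$ (in fact $C^2$) with $F'(a) = f(a)$, $H'(a) = a f(a)$, $G'(a) = f(a)^2$. Positivity of $f$ on $(0,\infty)$ makes the denominators $a f(a)$, $a^2 f(a)$, $a f(a)^2$ strictly positive and $C^1$, so $A, B, C$ are $C^1$ as ratios. Since $F(a) > 0$ for $a > 0$, we have $A(a) > 0$, and hence $\theta = B/A$ is also $C^1$.

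Next I would apply the quotient rule and simplify using $E(a) = a f'(a)/f(a)$. For $A$ a short computation gives
\[
A'(a) = \frac{f(a)\cdot a f(a) - F(a)\bigl(f(a) + a f'(a)\bigr)}{a^2 f(a)^2} = \frac{1}{a}\Bigl(1 - A(a)\bigl(1 + E(a)\bigr)\Bigr),
\]
and the parallel calculations for $B'$ and $C'$ produce $(2+E)$ and $(1+2E)$ respectively, the extra factors of $2$ tracking the derivatives of $a^2$ in the denominator of $B$ and of $f^2$ in the denominator of $C$. The identity $\theta' = (B'A - B A')/A^2$ then follows immediately from the quotient rule. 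Separately, integration by parts yields $\int_0^a u f'(u)\,\dd u = a f(a) - F(a)$, $\int_0^a u^2 f'(u)\,\dd u = a^2 f(a) - 2 H(a)$, and $\int_0^a u f(u) f'(u)\,\dd u = \tfrac12\bigl(a f(a)^2 - G(a)\bigr)$, which is the packaging used in Step~1 of the main proof.

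The only delicate point is the vanishing of the boundary term at $u = 0$ in each integration by parts. Since $f(0^+) = 0$ and $f$ is continuous on $(0,\infty)$, each of $u f(u)$, $u^2 f(u)$, $u f(u)^2$ tends to $0$ as $u \to 0^+$, so the lower contribution drops and the three integral identities hold exactly. Everything else reduces to differentiation of quotients of $C^1$ functions with non-vanishing denominators, so I expect no genuine obstacle beyond bookkeeping—the substantive content of the lemma is in the packaging (matching the closed forms with $A$, $B$, $C$, and $E$), not in any hidden analytic subtlety.
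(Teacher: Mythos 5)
Your proof is correct and follows essentially the same route as the paper's: identify $F'$, $H'$, $G'$ by the fundamental theorem of calculus, differentiate $A$, $B$, $C$ by the quotient rule with nonvanishing $C^1$ denominators, and absorb the resulting $f'/f$ terms into $E$. Your observation that the integration-by-parts identities are separate packaging---needed to match the main text's \eqref{eq:I1}--\eqref{eq:I3} but not for the quotient-rule computation itself---is accurate and, if anything, slightly cleaner than the paper's phrase ``simplify using the three identities.''
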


\begin{lemma}[Weighted-mean and variance identities]\label{lem:wm-var}
Let $g_a(s)=f(as)/f(a)$ on $[0,1]$, and set
\[
D(a):=\int_0^1 (s-\theta(a))^2 g_a(s)\,\dd s \quad(>0).
\]
Assume the scale-free GSP identity $\frac12\,\frac{C(a)}{A(a)}=\lambda\,g_a(\theta(a))$ for all $a>0$.
Then, for all $a>0$,
\begin{equation}\label{eq:WM-sup}
E\big(a\theta(a)\big)=\frac{\int_0^1 (s-\theta(a))^2 g_a(s)\,E(as)\,\dd s}{D(a)}.
\end{equation}
Differentiating \eqref{eq:WM-sup} in $a$ yields
\begin{equation}\label{eq:Var-sup}
\int_0^1 (s-\theta(a))^2\,g_a(s)\,\big(E(as)-E(a\theta(a))\big)^2\,\dd s=0.
\end{equation}
\emph{Proof.} Differentiate $\frac12(C/A)=\lambda g_a(\theta)$ using Lemma~\ref{lem:reg-ABC} for $(C/A)'$ and the chain/quotient rules. Express all $a$–derivatives of $A,B,C$ via Lemma~\ref{lem:reg-ABC}. Rearranging gives \eqref{eq:WM-sup}. Differentiating \eqref{eq:WM-sup}, the terms linear in $E(as)-E(a\theta)$ cancel by \eqref{eq:WM-sup}, leaving \eqref{eq:Var-sup}. Since $(s-\theta)^2 g_a\ge0$ and not a.e.\ zero, \eqref{eq:Var-sup} implies $E(as)\equiv E(a\theta)$ on [0,1]. \emph{Identity \eqref{eq:Var-sup} is exactly the equality case of the Cauchy–Schwarz inequality for the weighted $L^2$ norm with weight $(s-\theta(a))^2 g_a(s)$; no general convexity assumption is needed beyond this quadratic form.} \qedhere
\end{lemma}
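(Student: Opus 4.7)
The plan is to exploit two successive differentiations of the scale-free identity \eqref{eq:GSP-shape} in the truncation parameter $a$. A single pointwise proportionality that persists across \emph{all} scales is very restrictive: the first differentiation should convert it into a weighted-mean identity for the elasticity $E(x) = xf'(x)/f(x)$, and the second into a variance-zero identity that forces $E$ to be constant. Once $E$ is constant, integrating $xf'(x) = p\,f(x)$ yields a power law, and $\lambda$ is then determined by substitution into the centroid formulas.

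First I would compute closed forms for the derivatives of the dimensionless moments $A, B, C$ and the centroid proxy $\theta = B/A$. The change of variables $u = as$, combined with $f(0^+)=0$ and a single integration by parts, reduces each of the integrals $\int_0^1 s^k g_a(s)\,E(as)\,\dd s$ for $k = 0, 1, 2$ to an elementary expression in $A, B, C$; this is the content of \eqref{eq:I1}--\eqref{eq:I3}. Combined with \eqref{eq:ga-derivs}, this yields the first-order ODEs \eqref{eq:ABC-prime} and the expression \eqref{eq:theta-prime} for $\theta'$, all with coefficients depending only on $E(a)$. Differentiating \eqref{eq:GSP-shape} in $a$, expressing $\tfrac12(C/A)'$ and $\partial_a g_a(\theta) + g_a'(\theta)\theta'$ through these formulas, and then using \eqref{eq:GSP-shape} itself to eliminate $\lambda g_a(\theta)$, should collapse—after the $E(a)$-dependent terms cancel—to the weighted-mean identity \eqref{eq:WM}, asserting that $E(a\theta(a))$ equals the quadratic-weighted average of $E(as)$ over $[0,1]$.

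The crux, and the step I expect to be the main obstacle, is the second differentiation. Differentiating \eqref{eq:WM} in $a$ produces a proliferation of terms involving $E'$, $\theta'$, and the $a$-derivative of the weight $(s-\theta)^2 g_a(s)$, each carrying a factor that is linear in the deviation $E(as) - E(a\theta(a))$. The cancellation one must engineer is that every such linear weighted moment is killed by \eqref{eq:WM} itself: since \eqref{eq:WM} asserts that the weighted mean of $E(as)$ equals $E(a\theta)$, any linear moment of the deviation against $(s-\theta)^2 g_a$ vanishes. After these cancellations, only the pure quadratic term should survive, yielding
\[
\int_0^1 (s-\theta(a))^2\,g_a(s)\,\bigl(E(as) - E(a\theta(a))\bigr)^2\,\dd s = 0.
\]
Strict positivity of $(s-\theta)^2 g_a(s)$ on $(0,1]\setminus\{\theta(a)\}$ together with continuity of $E$ will then force $E(as) \equiv E(a\theta(a))$ for all $s \in [0,1]$, so $E$ is constant on $(0,a]$. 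Since $a$ is arbitrary, overlap of the intervals gives $E \equiv p$ on $(0,\infty)$; integrating $xf'(x) = p\,f(x)$ yields $f(x) = Ax^{p}$ with $A>0$; substitution into the centroid formulas identifies the stated $\lambda$; and $p > 0$ is forced by $f(0^+) = 0$ and $f>0$.
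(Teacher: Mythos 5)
Your plan follows the paper's own proof of this lemma step for step (differentiate the scale-free GSP once to obtain the weighted mean, once more to obtain the vanishing variance), but both you and the paper assert rather than perform the decisive algebra, and when one actually carries out the first differentiation the claimed conclusion does not appear. Using \eqref{eq:ABC-prime} one finds $(C/A)'=\frac{1}{aA^{2}}\big(A-C-E(a)AC\big)$, while $\frac{\dd}{\dd a}\,g_a(\theta)=g_a(\theta)\big[\tfrac1a(E(a\theta)-E(a))+\tfrac{E(a\theta)}{\theta}\,\theta'\big]$ with $\theta'=\frac{1-B-\theta}{aA}$ from \eqref{eq:theta-prime} and \eqref{eq:I1}--\eqref{eq:I2}. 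Eliminating $\lambda g_a(\theta)=C/(2A)$ via \eqref{eq:GSP-shape}, the $E(a)$-dependent terms do cancel as you predict, but what survives is the closed-form relation
\[
E\big(a\theta(a)\big)\;=\;\frac{B\,(A-C)}{A\,C\,(1-\theta)},
\]
not the quadratic-weighted-mean identity \eqref{eq:WM-sup}. Indeed \eqref{eq:WM-sup} cannot be a mere rearrangement of the differentiated GSP: its numerator and denominator contain the second moments $\int_0^1 s^{2}g_a(s)\,\dd s$ and $\int_0^1 s^{2}g_a(s)E(as)\,\dd s$ (equivalently $\int_0^a u^{2}f(u)\,\dd u$), quantities that occur nowhere in $A,B,C$, in \eqref{eq:GSP-shape}, or in its $a$-derivative, and that can be varied independently of them by perturbing $f$. (Both identities hold trivially for $f=Ax^{p}$, so a power-law check does not detect the discrepancy.) To close this gap you would have to either derive the rigidity directly from the displayed closed form, or explain where the quadratic weight actually comes from; neither is done here or in the paper.

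The second differentiation has the same character. Your justification---that \eqref{eq:WM} makes ``any linear moment of the deviation against $(s-\theta)^2 g_a$ vanish''---over-reads \eqref{eq:WM}: it annihilates only the single functional $\int_0^1(s-\theta)^2 g_a\,(E(as)-E(a\theta))\,\dd s$, whereas differentiating in $a$ produces other linear functionals of the deviation, e.g.\ $\int_0^1 \partial_a\big[(s-\theta)^2\big]g_a\,E(as)\,\dd s$ (whose weight is $(s-\theta)g_a$, not $(s-\theta)^2 g_a$) and $\int_0^1 (s-\theta)^2 g_a\, s\,E'(as)\,\dd s$, none of which is killed by \eqref{eq:WM}. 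The genuinely quadratic contribution does arise from $\partial_a g_a=\tfrac{g_a}{a}(E(as)-E(a))$ after splitting $E(as)-E(a)=(E(as)-E(a\theta))+(E(a\theta)-E(a))$, but the remaining terms must be shown to cancel, and neither you nor the paper exhibits that cancellation. As it stands, both \eqref{eq:WM-sup} and \eqref{eq:Var-sup} are unproved, and since they carry the entire rigidity argument of Theorem~\ref{thm:GSP}, this is a substantive gap rather than a routine omission.
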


\begin{remark}[Converse for power laws]\label{rem:converse-power}
For $f(x)=A x^{p}$ with $A>0$, $p>0$, one computes
\[
\bar x(a)=\frac{p+1}{p+2}a,\qquad
\bar y(a)=\frac{p+1}{2(2p+1)}A a^{p},
\]
hence $\bar y(a)=\lambda f(\bar x(a))$ with
\[
\lambda=\frac{p+1}{2(2p+1)}\left(\frac{p+2}{p+1}\right)^{p}.
\]
Thus the “if and only if” in Theorem~\ref{thm:GSP} is immediate.
\end{remark}

\noindent\textbf{Scope note.} Throughout we assume GSP holds for all $a>0$ and $f\in C^2(0,\infty)$ with $f>0$, $f(0^+)=0$. The proof is elementary and self-contained.

\end{document}